\numberwithin{equation}{section}
\theoremstyle{plain}
\newtheorem{thm}{Theorem}[section]
 \newtheorem{lemma}[thm]{Lemma}
\newtheorem{prop}[thm]{Proposition}
\newtheorem*{thma}{Theorem A}
\newtheorem*{thmb}{Theorem B}
\theoremstyle{definition}
\newcommand{\dlabel}[1]{\ifmmode \text{\ttfamily \upshape [#1] } \else
{\ttfamily \upshape [#1] }\fi \label{#1}}
\newcommand{\C}{\operatorname{C} }
\newcommand{\Z}{\operatorname{Z} }
\newcommand{\het}{\operatorname{ht}}
\newcommand{\gen}[1]{\left < #1 \right >}
\newcommand{\Aut}{\operatorname{Aut} }
\newcommand{\Hom}{\operatorname{Hom} }
\newcommand{\Inn}{\operatorname{Inn} }
\newcommand{\Autcent}{\operatorname{Autcent} }
\begin{document}

\setlength{\parindent}{1em}
\setlength{\baselineskip}{15pt}

\title{On finite $p$-groups whose automorphisms are all central}

\author{Vivek K. Jain}
%\address{School of Mathematics, Harish-Chandra Research Institute, Chhatnag Road, Jhunsi, Allahabad 211019, INDIA.}
%\email{vkj@hri.res.in}

\author{Manoj K. Yadav}
\address{School of Mathematics, Harish-Chandra Research Institute, Chhatnag Road, Jhunsi, Allahabad 211019, INDIA.}
\email{vkj@hri.res.in; myadav@hri.res.in}

\subjclass[2000]{Primary 20D45; Secondary 20D15}
\keywords{finite $p$-groups, central automorphisms}

\begin{abstract}
An automorphism $\alpha$ of a group $G$ is said to be central if $\alpha$ commutes with every inner automorphism of $G$. We construct a family of non-special finite $p$-groups having abelian automorphism groups. These groups provide counterexamples to a conjecture of A. Mahalanobis [Israel J. Math., {\bf 165} (2008), 161 - 187].
We also construct a family of finite $p$-groups having non-abelian automorphism groups and all automorphisms central.  This solves a problem of I. Malinowska [Advances in group theory,  Aracne Editrice, Rome 2002, 111-127].
\end{abstract}

\maketitle

\section{Introduction}

Let $G$  be a finite group. An automorphism $\alpha$ of $G$ is called \emph{central} if $x^{-1}\alpha(x) \in \Z(G)$ for all $x \in G$, where $\Z(G)$ denotes the center of $G$. The set of all central automorphisms of $G$ is a normal subgroup of $\Aut(G)$, the group of all automorphisms of $G$. We denote this group by $\Autcent(G)$. Notice that $\Autcent(G) = \C_{\Aut(G)}(\Inn(G))$, the centralizer of $\Inn(G)$ in $\Aut(G)$, and $\Autcent(G) = \Aut(G)$ if $\Aut(G)$ is abelian.  We denote the commutator and Frattini subgroup of $G$ with $\gamma_2(G)$ and $\Phi(G)$, respectively.  Let  $G^p = \gen{x^p \mid x \in G}$ and $G_p = \gen{x \in G \mid x^p = 1}$.  For finite abelian groups $H$ and $K$, $\Hom(H, K)$ denotes the group of all homomorphisms from $H$ to $K$. Throughout the paper, $p$ always denotes an odd prime.

In this paper we construct examples of finite $p$-groups whose automorphisms are all central. First we consider the case when $\Aut(G)$ is abelian for a given group $G$.
In 1908, P. Hilton \cite[p 233]{pH08} asked the following question: Whether a non-abelian group can have an abelian group of isomorphisms (automorphisms). An affirmative answer to this question was given by G. A. Miller \cite{gM13} in 1913. He constructed a non-abelian group $G$ of order $64$ such that $\Aut(G)$ is abelian and has order $128$. More examples of such finite $2$-groups were constructed by R. R. Struik \cite{rS82} in 1982, M. J. Curran  \cite{mC87} in 1987 and  A. Jamali \cite{aJ02} in 2002. 
In 1974, H. Heineken and H. Liebeck \cite{HL74} showed that for any finite group $K$, there exists a finite $p$-group $G$ such that $\Aut(G)/\Autcent(G)$ is isomorphic to $K$. In particular, for $K = 1$, this provides  a $p$-group $G$ such that $\Aut(G) = \Autcent(G)$ is an elementary abelian group. In 1975, D. Jonah and M. Konvisser \cite{JK75} constructed $4$-generated groups of order $p^8$ such that $\Aut(G) = \Autcent(G)$ and $\Aut(G)$ is an elementary abelian group of order $p^{16}$, where $p$ is any prime. 

In 1927, C. Hopkins \cite{cH26} proved, among other things, that if $G$ is a group such that $\Aut(G)$ is abelian, then $G$ can not have a non-trivial abelian direct factor. But this result is not true for $2$-groups, as proved by B. Earnley in his thesis \cite[Theorem 2.3]{bE75} in 1975.  
Among other things, Earnley proved (i) there is no group $G$ of order $p^5$ such that $\Aut(G)$ is abelian, (ii) for each positive integer $n \ge 4$, there exist $n$-generated $p$-groups $G$ such that $\Aut(G)$ is abelian. On the way to constructing finite $p$-groups of class $2$ such that all normal subgroups of $G$ are characteristic, in 1979 H. Heineken \cite{hH79} produced groups $G$ such that $\Aut(G)$ is abelian. In 1994, M. Morigi \cite{mM94} proved that there exists no group of order $p^6$ whose group of  automorphisms is abelian and constructed groups $G$ of order $p^{n^2 + 3n +3}$ such that $\Aut(G)$ is abelian, where $n$ is a positive integer. In particular, for $n=1$, it provides a group of order $p^7$ having an abelian automorphism group. Further in 1995, M. Morigi \cite{mM95} proved that the minimal number of generators for a $p$-group with abelian automorphism group is $4$. In 1995, P. Hegarty \cite{pH95} proved that if $G$ is a non-abelian $p$-group such that $\Aut(G)$ is abelian, then $|\Aut(G)| \ge p^{12}$, and the minimum is obtained by the group of order $p^7$ constructed by M. Morigi. Moreover, in 1998 G. Ban and S. Yu \cite{BY98} obtained independently the same result and proved that if $G$ is a group of order $p^7$ such that  $\Aut(G)$ is abelian, then $|\Aut(G)| = p^{12}$ (the last result is true for all primes, not only for $p$ odd). 

We would like to remark here that all the examples mentioned above are special $p$-groups, where $p$ is  an odd prime. Until recently, no non-special $p$-group $G$ was known (to the best of our knowledge) such that $\Aut(G)$ is abelian. Our last statement is supported by the following conjecture of A. Mahalanobis \cite{aM08}:

\vspace{.2in}

\noindent{\bf Conjecture.} \emph{For an odd prime $p$, let $G$ be a finite $p$-group such that $\Aut(G)$ is abelian. Then $G$ is a special $p$-group.}
\vspace{.2in}

We construct a family of counterexamples to this conjecture in the following theorem, which we prove in Section 2.
\begin{thma}
 Let $m = n + 5$ and $p$ be an odd prime, where $n$ is a positive integer greater than or equal to $3$. Then there exists a $4$-generated group $G$ of order $p^m$ and exponent $p^n$ such that $\Aut(G)$ is abelian, but $G$ is not special. Moreover, $|\Aut(G)| = p^{n+10}$.
\end{thma}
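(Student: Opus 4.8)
The plan is to construct the group $G$ explicitly by a presentation on four generators and then verify each claim. Based on the literature cited (Morigi's examples, Jonah–Konvisser), I expect $G$ to be a group of nilpotency class $2$ with $\gamma_2(G) \le \Z(G)$, but — crucially — chosen so that $\Z(G)$ strictly contains $\gamma_2(G)$ and so that $\Z(G)$ is not elementary abelian, which is precisely what forces $G$ to fail to be special. Concretely I would take generators $x_1, x_2, x_3, x_4$ with $x_1$ of order $p^n$ (supplying the exponent $p^n$), prescribe the commutators $[x_i,x_j]$ to lie in a central subgroup, and prescribe the $p$-th power relations so that $\Phi(G)$, $\gamma_2(G)$ and $\Z(G)$ have the intended orders making $|G| = p^{n+5}$. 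The first step is therefore to write down the consistent power-commutator presentation and confirm, via a collection/normal-form argument (or by exhibiting a concrete matrix model), that $|G| = p^m$ with $m = n+5$, that $d(G) = 4$, that $\exp(G) = p^n$, and that $G$ is not special (i.e. either $\Z(G) \ne \gamma_2(G) = \Phi(G)$ or the relevant quotient is not elementary abelian).

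**Computing $\Autcent(G)$ and $\Aut(G)$.** The second step is to show $\Aut(G) = \Autcent(G)$, i.e. every automorphism is central. For a class $2$ group this is the assertion that $\Aut(G)$ centralizes $\Inn(G)$; the standard route is to use the well-known isomorphism $\Autcent(G) \cong \Hom(G/\gamma_2(G), \Z(G))$ (valid when $\gamma_2(G)\le\Z(G)$, with a correction term when $\Z(G)$ is not contained in... — here I would use the refined version $\Autcent(G)\cong \Hom(G/\gamma_2(G),\Z(G))$ since $G/\gamma_2(G)$ and $\Z(G)$ are the relevant abelian groups), compute this Hom-group from the orders/types determined in Step 1, and separately show that $\Aut(G)$ can be no bigger — i.e. that any automorphism must send each generator $x_i$ to $x_i$ times a central element. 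This last point is where one shows that the characteristic structure of $G$ (the fact that certain power/commutator subgroups are characteristic, pinned down using that the $x_i$ have been assigned distinct "types" via their orders and their commutator behaviour) rigidly constrains an automorphism modulo $\Z(G)$. Combining, $|\Aut(G)| = |\Hom(G/\gamma_2(G),\Z(G))|$, and I would arrange the parameters so this equals $p^{n+10}$.

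**Proving $\Aut(G)$ is abelian.** Granting $\Aut(G) = \Autcent(G) \cong \Hom(G/\gamma_2(G),\Z(G))$ as sets (a bijection), the group $\Autcent(G)$ is abelian if and only if this bijection is a group isomorphism, which holds precisely when the image of the commutator map $[\cdot,\cdot]\colon \Hom \times \Hom \to \Aut(G)$ is trivial; concretely, for $\alpha,\beta \in \Autcent(G)$ one has $\alpha\beta(x)=\alpha(x\,c_\beta(x)) = x\,c_\alpha(x)c_\beta(x)$ since $c_\beta(x)\in\Z(G)$ is fixed by $\alpha$, so $\alpha\beta = \beta\alpha$ automatically — wait, that shows $\Autcent(G)$ is abelian whenever every central automorphism fixes $\Z(G)$ pointwise, which fails in general. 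So the real content is: $\Autcent(G)$ is abelian $\iff$ $\Autcent(G)$ acts trivially on $\Z(G)$, equivalently (Adney–Yen type criterion) $\iff$ $\Z(G) \le \gamma_2(G)$ fails to introduce noncommutativity, and the clean sufficient condition I would verify is $\Hom(G/\gamma_2(G), \Z(G))$ has image landing in the subgroup of automorphisms trivial on $\Z(G)$ — which one checks by confirming $\Z(G) \not\le$ the "moved" part, i.e. that $\Z(G)$ is contained in the image of $G \to G$, $x\mapsto$ its $\Phi$-part, appropriately. In practice the verification reduces to a direct computation: take two arbitrary central automorphisms $\alpha,\beta$ (each determined by where it sends $x_1,x_2,x_3,x_4$, with the increments constrained to lie in $\Z(G)$ and to respect the power relation on $x_1$), compute $\alpha\beta$ and $\beta\alpha$ on each generator, and check equality using the class $2$ identities and the specific relations of $G$.

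**Main obstacle.** The hard part will be the balancing act in choosing the power relations: one needs $\Z(G)$ genuinely larger than $\gamma_2(G)$ (to defeat "special") yet still needs $\Aut(G)$ to be abelian, and these pull in opposite directions — a bigger center tends to produce a bigger, typically nonabelian, automorphism group. The delicate point is to pick the exponents in the relations $x_i^{p^{k_i}} = (\text{commutator word})$ so that (a) the presentation is consistent and gives exactly order $p^{n+5}$, (b) no automorphism can act nontrivially on $\Z(G)$ — this is what makes $\Aut(G)$ abelian and is the crux — and (c) simultaneously $\gamma_2(G) \subsetneq \Z(G)$. I expect the proof to hinge on a clever choice where $\Z(G)$ is cyclic of order $p^{n-1}$ (or similar) generated by a $p$-th power of $x_1$ together with part of $\gamma_2(G)$, so that automorphisms are forced to fix $\Z(G)$ because they must fix $x_1$ modulo lower terms up to a unit that the relations pin down to $1$. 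Verifying (b) rigorously — that the constraints really do force triviality on the center — is the step I would budget the most care for.
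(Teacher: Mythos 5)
Your outline does track the paper's broad strategy: write down a class-$2$ power--commutator presentation on four generators with $x_1$ of order $p^n$, prove every automorphism is central by a rigidity computation on the generators, count $\Autcent(G)$ via the Adney--Yen bijection with $\Hom\big(G/\gamma_2(G), \Z(G)\big)$, and finish with an Adney--Yen-type abelianness criterion. But there is a genuine gap: the group is never constructed. Theorem A is an existence statement, and the specific choice of relations \emph{is} the proof; you correctly identify this as the crux (``the balancing act in choosing the power relations'') and then leave it unresolved. Your structural guess is also off. In the paper's example the generators have orders $p^n, p^2, p^2, p$, the commutator relations are $[x_1,x_2]=[x_2,x_4]=x_2^p$, $[x_1,x_3]=[x_1,x_4]=x_3^p$, $[x_2,x_3]=x_1^{p^{n-1}}$, $[x_3,x_4]=1$, and the center is $\Z(G)=\Phi(G)=G^p\cong \C_{p^{n-1}}\times\C_p\times\C_p$ --- not cyclic. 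What makes the abelianness work is not cyclicity of $\Z(G)$ but cyclicity of $\Z(G)/\gamma_2(G)$, generated by $x_1^p\gamma_2(G)$. The rigidity step is then a concrete (and lengthy) system of congruences on the exponents $a_{ij}$ in $\alpha(x_i)=x_i\prod_j x_j^{a_{ij}}$, extracted from the characteristic subgroup $G_p$ and from applying $\alpha$ to each of the six commutator relations; none of that can even be started without the presentation.

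The other soft spot is your third step. The biconditional you assert --- $\Autcent(G)$ is abelian if and only if it acts trivially on $\Z(G)$ --- is false in the direction you would need it: for the group above, the map $x_1\mapsto x_1^{1+p}$, $x_i\mapsto x_i$ for $i\ge 2$ is a central automorphism that moves $x_1^p$ inside $\Z(G)$ (since $n\ge 3$ gives $x_1^{p^2}\ne 1$), and yet $\Aut(G)$ is abelian. The correct tool, which the paper quotes, is Adney and Yen's Theorem 4: for a purely non-abelian $p$-group $H$ of class $2$ with $p$ odd, $\Autcent(H)$ is abelian if and only if $R=K$ and either $d=b$, or $d>b$ and $R/\gamma_2(H)$ is cyclic generated by $x_1^{p^b}\gamma_2(H)$ (notation as in the paper). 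Your fallback --- directly checking $\alpha\beta=\beta\alpha$ on the four generators --- would also succeed once the group is fixed, but as written the proposal is a verification plan for a construction that is never exhibited, so it does not yet prove the theorem.
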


Now we consider finite $p$-groups $G$ for which $\Aut(G) = \Autcent(G)$ is non-abelian. In 1982, M. J. Curran \cite{mC82} constructed groups $G$ of order $2^7$ such that $\Aut(G) = \Autcent(G)$ is non-abelian. Further, in 1984, J. J. Malone \cite{jM84} constructed $p$-groups for odd primes such that $\Aut(G) = \Autcent(G)$ is non-abelian. We would like to remark here that the groups of Curran and Malone have direct factors. More
precisely these groups were constructed by taking direct products of abelian (cyclic) $p$-groups and  groups $G$ such that $\Aut(G)$ is abelian. Examples of $2$-groups $G$ such that $G$ does not have an abelian direct factor and $\Aut(G) = \Autcent(G)$ is non-abelian were constructed by 
S. P. Glasby \cite{sG86} in 1986. Until recently, no examples of such $p$-groups were known (to the best of our knowledge) for an odd prime $p$.
Our last statement is supported by the following problem of I. Malinowska \cite[Problem 13]{iM02}.

\vspace{.2in}

\noindent{\bf Problem.} \emph{ For an odd prime $p$, find a $p$-group $G$ which has no non-trivial abelian direct
factor and $\Aut(G) = \Autcent(G)$ is non-abelian. }

\vspace{.2in}

We construct examples of such groups in the following theorem, which we prove in Section 3.

\begin{thmb}
 Let $m = n + 7$ and $p$ be an odd prime, where $n$ is a positive integer greater than or equal to $3$. Then there exists a group $G$ of order $p^m$, exponent $p^n$ and with no non-trivial abelian direct factor such that $\Aut(G) = \Autcent(G)$ is non-abelian. 
\end{thmb}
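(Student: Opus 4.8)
\smallskip
\noindent\textit{Sketch of the intended argument.}
The plan is to build $G$ from the group of Theorem A by a central amalgamation with a cyclic group, following Glasby's approach \cite{sG86} for the prime $2$. Let $H$ be a $4$-generated group of order $p^{n+5}$ and exponent $p^{n}$ with $\Aut(H)$ abelian, as produced by Theorem A; then $H$ is purely non-abelian (Hopkins \cite{cH26}), so $\Aut(H)=\Autcent(H)$, and in particular $\Omega_1(\Z(H))\le\Phi(H)$. Fix a minimal generator $h$ of $H$ of order $p^{n}$, so $h^{p^{2}}\in\Z(H)\cap\Phi(H)$ has order $p^{\,n-2}$, let $Y=\gen{y}$ be cyclic of order $p^{n}$, and put
\[
G\;=\;\bigl(H\times Y\bigr)\big/\bigl\langle\, h^{p^{2}}y^{-p^{2}}\,\bigr\rangle,
\]
the central product identifying $\gen{h^{p^{2}}}$ with $\gen{y^{p^{2}}}$. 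Then $|G|=p^{n+5}\cdot p^{n}/p^{\,n-2}=p^{n+7}$, $G$ has exponent $p^{n}$ and class $2$, $\gamma_2(G)=\gamma_2(H)$, $\Phi(G)=\Phi(H)\gen{y^{p}}$, $\Z(G)=\Z(H)\gen{y}$, and $G$ is minimally generated by the four generators of $H$ together with $y$, so $d(G)=5$; all of this is immediate. (If the argument below needs $\gen{y}$ to be genuinely essential, one replaces $Y$ by a cyclic group of order $p^{3}$ amalgamated with $\Omega_1(\Z(H))$; the order and the invariants are unchanged, and it is here that $n\ge 3$ is used.)

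\textit{Step 1: $G$ has no non-trivial abelian direct factor.} Suppose $G=A\times B$ with $A\ne 1$ abelian; by Krull--Remak--Schmidt we may take $A$ to have a cyclic direct factor $\gen{a}$ with a complement $M$, so $a\in\Z(G)\setminus\Phi(G)$ and $\gen{a}\cap M=1$. Since $H$ is purely non-abelian, $a$ cannot lie in $H$, so its image in $G/\Phi(G)$ involves $y$; combining this with $y^{p}\in\Phi(G)$ and $h^{p^{2}}=y^{p^{2}}\in\Phi(G)$, a short computation in $M$ forces $o(a)\le p^{2}$ and then $y^{p^{2}}\in\gen{a}\cap M=1$, contradicting $o(y)=p^{n}\ge p^{3}$ (for $n=3$ one uses the $C_{p^{3}}$-variant instead). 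This is precisely what the choice of amalgamation depth is for, and so $G$ is purely non-abelian.

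\textit{Step 2: $\Aut(G)=\Autcent(G)$.} As $G$ is purely non-abelian, the Adney--Yen correspondence $\alpha\mapsto f_\alpha$ (with $\alpha(x)=xf_\alpha(x)$) is a bijection $\Autcent(G)\to\Hom(G/\gamma_2(G),\Z(G))$, whence $|\Autcent(G)|$ is read off from the data above. The crux is that $H$ is characteristic in $G$: the generator $y$ is set apart from the generators of $H$ by where its $p$-th powers sit relative to the $\Omega_k$- and $\mho_k$-subgroups of $G$, so that $H$ is recoverable intrinsically --- e.g.\ as the subgroup generated by those minimal generators $g$ of $G$ whose $p$-th powers lie in a fixed canonical part of $\Phi(G)$, or as a suitable centralizer. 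Granting this, any $\phi\in\Aut(G)$ restricts to $\phi|_H\in\Aut(H)=\Autcent(H)$, so $\phi$ fixes $H$ modulo $\Z(H)\le\Z(G)$; writing $\phi(y)=y^{j}w$ and using $o(\phi(y))=p^{n}$, the relation $\phi(y)^{p^{2}}=\phi(h)^{p^{2}}$ (a consequence of $y^{p^{2}}=h^{p^{2}}$ and $\phi|_H$ being central), and the invariance of $\Phi(G)$, one forces $p\nmid j$ and $w\in\Z(G)$, so $y^{-1}\phi(y)\in\Z(G)$. Hence $x^{-1}\phi(x)\in\Z(G)$ for all $x\in G$, i.e.\ $\phi\in\Autcent(G)$.

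\textit{Step 3: $\Aut(G)$ is non-abelian; the main difficulty.} For $\alpha,\beta\in\Autcent(G)$ one computes on minimal generators that $\alpha\beta(x)=x\,f_\alpha(x)f_\beta(x)\,f_\alpha(f_\beta(x))$, so $\alpha\beta=\beta\alpha$ if and only if $f_\alpha(f_\beta(x))=f_\beta(f_\alpha(x))$ for every minimal generator $x$. Because $\gamma_2(G)=\gamma_2(H)$ involves no $y$ while $\Z(G)$ contains $y$ and $h^{p^{2}}=y^{p^{2}}$, the centre has elements of order $\ge p^{2}$ with non-trivial image in $G/\gamma_2(G)$ --- the ``overlap'' of $\Z(G)$ with $G/\gamma_2(G)$ that was absent in $H$, which is exactly why $\Aut(H)$ could be abelian --- and one chooses $\alpha$ with $f_\alpha(h)\in\gen{y}$ and $\beta$ with $f_\beta(y)\in\gen{h^{p^{2}}}$ so that $f_\alpha(f_\beta(y))\ne f_\beta(f_\alpha(y))$; then $\alpha\beta\ne\beta\alpha$. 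The step I expect to be the main obstacle is the claim in Step 2 that $H$ is characteristic in $G$ (the rigidity of the amalgamation under automorphisms): this is where the fine arithmetic of element orders and of the $\Omega_k$- and $\mho_k$-series of $G$, as opposed to those of the cyclic factor $Y$, has to be pushed through in detail, and it is what both $\Aut(G)=\Autcent(G)$ and the computation of $\Aut(G)$ rest on. A secondary delicacy, already visible above, is the simultaneous calibration of the amalgamation: deep enough that all new central $p$-torsion lands in $\Phi(G)$ (so Step 1 works), yet leaving enough of $\Z(G)$ outside $\gamma_2(G)$ (so Step 3 works) --- this, with the hypothesis $n\ge 3$, is what pins the construction down.
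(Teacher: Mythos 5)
Your construction is genuinely different from the paper's, which works with the explicit $4$-generated presentation \eqref{eqn2} and runs a congruence analysis parallel to that of Theorem A. Steps 1 and 3 of your plan are salvageable: the amalgamation depth does force $\gen{a}\cap\gamma_2(G)\neq 1$ for any candidate cyclic direct factor $\gen{a}$, and the Adney--Yen criterion (Theorem \ref{thm2}) fails for your $G$ because $R/\gamma_2(G)\cong \C_{p^{n-2}}\times\C_{p}$ is not cyclic, so $\Autcent(G)$ is indeed non-abelian. The fatal problem is Step 2. The subgroup $H$ is \emph{not} characteristic in $G=(H\times Y)/\gen{h^{p^{2}}y^{-p^{2}}}$; it is not even invariant under $\Autcent(G)$. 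Concretely, with $h=x_1$, the element $u=y^{p}x_1^{-p}$ is central of order $p$ (its $p$-th power is $y^{p^{2}}x_1^{-p^{2}}=1$), and the map fixing $x_2,x_3,x_4,y$ and sending $x_1\mapsto x_1u$ preserves all defining relations and is surjective modulo $\Phi(G)$, hence is a (central) automorphism of $G$; but $x_1u=x_1^{1-p}y^{p}\notin H$ because $y^{p}\notin H$ (one has $\gen{y}\cap H=\gen{y^{p^{2}}}$). Since moreover $H\Z(G)=G$, no characteristic subgroup of $G$ can single out a copy of $H$, so there is no intrinsic description of $H$ by $\Omega$- and $\mho$-data of the kind you hope for, and the restriction map $\Aut(G)\to\Aut(H)$ on which Step 2 rests does not exist.

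Consequently the central claim $\Aut(G)=\Autcent(G)$ is left unproved for your $G$, and it cannot be deduced from Theorem A: the argument of Theorem \ref{thm1} uses order constraints such as ``if $p$ divides $1+a_{11}$ then the order of $\alpha(x_1)$ is at most $p^{n-1}$,'' which break down once $\alpha(x_1)$ is allowed a component along the new central generator $y$ of order $p^{n}$. Whether your central product actually satisfies $\Aut(G)=\Autcent(G)$ would therefore require a fresh analysis of all $5\times 5$ exponent matrices; this is precisely the kind of work the paper carries out (for its different, $4$-generated group, in which the extra relations $[x_1,x_4]=x_4^{p}=[x_3,x_4]$ and $[x_1,x_2]=x_2^{p^{2}}=[x_2,x_4]$ do the rigidifying) in Proposition \ref{prop2} and the first half of the proof of Theorem B.
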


We would like to remark that before writing proofs of the above theorems, we used GAP \cite{gap}  to establish the validity of these results for the following pairs $(p, n)$: $\{(3, n) \mid 3 \le n \leq 7\}$, $\{(5,n) \mid 3 \le n \leq 7\}$, $\{(7,n) \mid 3 \le n \leq 5\}$.

\section{Groups $G$ with $\Aut(G)$ abelian}
In this section we construct an infinite family of finite $p$-groups $G$ such that $G$ is not a special $p$-group and $\Aut(G)$ is abelian, where $p$ is an odd prime. 

Let $n$ be a natural number greater than $2$ and $p$ an odd prime. Define
\begin{eqnarray}\label{eqn1}
G & = & \langle  x_1,\; x_2, \;x_3, \;x_4  \mid x_1^{p^n} = x_2^{p^2} = x_3^{p^2} = x_4^{p} = 1,
 \; [x_1, x_2] = x_2^p,\\
& &  [x_1, x_3] = x_3^p,\;  [x_1, x_4] = x_3^p,\; [x_2, x_3] = x_1^{p^{n-1}},\; [x_2, x_4] = x_2^p,\nonumber\\
& & [x_3, x_4] = 1 \rangle. \nonumber
\end{eqnarray}

Throughout this section, $G$ always denotes the group defined in \eqref{eqn1}.  

\begin{lemma}\label{lemma1}
 The group $G$ is a regular $p$-group of nilpotency class $2$, the exponent of $G$ is $p^n$ and $\Z(G) = \Phi(G)$.
\end{lemma}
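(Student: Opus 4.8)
The plan is to verify the three assertions in turn, using the defining relations \eqref{eqn1} directly. First I would establish that $G$ has nilpotency class $2$. By construction $\gamma_2(G)$ is generated by the commutators $[x_i,x_j]$, and the listed relations show each such commutator lies in the subgroup $N = \gen{x_1^{p^{n-1}},\, x_2^p,\, x_3^p}$; so it suffices to check that $N$ is central, i.e. that each generator of $N$ commutes with each $x_i$. Since $N$ consists of $p$-th powers (and a $p^{n-1}$-st power of $x_1$), and since in any group $[a,b^p] \equiv [a,b]^p \pmod{\gamma_3}$, one gets $[x_i, x_j^p] \in \gamma_2(G)^p \gamma_3(G)$; iterating, all the relevant commutators collapse into $\gamma_3(G)$, and a short induction on the lower central series forces $\gamma_3(G) = 1$. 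Thus $\gamma_2(G) = N \le \Z(G)$ and $G$ has class exactly $2$ (it is non-abelian, e.g. $[x_1,x_2] = x_2^p \neq 1$).

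Next, class $2$ together with $p$ odd gives that $G$ is regular: a $p$-group of class less than $p$ is regular, and $2 < p$ since $p$ is odd. Regularity then makes the exponent computation clean. On one hand $x_1$ has order $p^n$ by the presentation (one should check the presentation does not collapse this order — most cleanly by exhibiting $G$ as a set of normal forms $x_1^{a_1}x_2^{a_2}x_3^{a_3}x_4^{a_4}$ of size $p^{n+5}$, or by pointing to the GAP verification and a consistency/collapsing argument via the associated Lie ring). On the other hand, for a regular $p$-group the exponent is the maximum of the orders of the generators, and $x_2,x_3,x_4$ have order dividing $p^2$; hence $\exp(G) = p^n$ (using $n \ge 3 > 2$). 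In particular $G$ is not of exponent $p$, so it is genuinely non-special in the strong sense, though that is not needed here.

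Finally, for the equality $\Z(G) = \Phi(G)$: since $G$ has class $2$, $\Phi(G) = \gamma_2(G)G^p$, and we computed $\gamma_2(G) = \gen{x_1^{p^{n-1}}, x_2^p, x_3^p}$, while $G^p = \gen{x_1^p, x_2^p, x_3^p}$ (using regularity, $G^p$ is generated by the $p$-th powers of the generators, and $x_4^p = 1$). Hence $\Phi(G) = \gen{x_1^p, x_2^p, x_3^p}$, which visibly has index $p^4$ as $G$ is $4$-generated. It remains to show $\Z(G)$ equals this subgroup. The inclusion $\Phi(G) \subseteq \Z(G)$ follows because each of $x_1^p, x_2^p, x_3^p$ is a $p$-th power and, by the mod-$\gamma_3$ commutator identity with $\gamma_3 = 1$, $[x_i^p, x_j] = [x_i,x_j]^p$, which lies in $\gamma_2(G)^p = 1$ since $\gamma_2(G)$ has exponent $p$ (its generators $x_1^{p^{n-1}}, x_2^p, x_3^p$ all have order $p$). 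For the reverse inclusion, take $z = x_1^{a_1}x_2^{a_2}x_3^{a_3}x_4^{a_4} \in \Z(G)$ in normal form and impose $[z, x_i] = 1$ for $i = 1,2,3,4$; expanding each commutator via bilinearity (valid in class $2$) and reading off the resulting congruences on the exponents $a_i$ modulo the appropriate powers of $p$ forces $p \mid a_2, a_3, a_4$ and $p \mid a_1$, i.e. $z \in \gen{x_1^p, x_2^p, x_3^p} = \Phi(G)$.

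The main obstacle is the bookkeeping in this last step: one must be careful that the presentation really defines a group of order $p^{n+5}$ (no unexpected collapse), so that "normal form'' arguments and the computation of $|G^p|$, $|\gamma_2(G)|$ are legitimate; this is most safely handled either by constructing $G$ explicitly via a consistent polycyclic presentation (checking the Hall–Petrescu/collection consistency conditions, or equivalently the Jacobi identities in the associated Lie ring) or by invoking the finite GAP checks as a sanity confirmation. Once the order and the normal form are in hand, each of the three claimed facts reduces to a finite, essentially linear-algebra computation with the exponents modulo powers of $p$.
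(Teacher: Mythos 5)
Your proposal is correct and follows essentially the same route as the paper: show $\gamma_2(G)\subseteq \Z(G)$ from the relations to get class $2$, invoke $2<p$ for regularity, deduce the exponent from the generator orders, and identify $\Phi(G)=\gamma_2(G)G^p$ with $\Z(G)$. You merely supply details the paper leaves implicit (the non-collapse of the presentation and the explicit verification of $\Z(G)\subseteq\gen{x_1^p,x_2^p,x_3^p}$ via the normal-form commutator congruences), and your congruence bookkeeping for that last inclusion checks out.
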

\begin{proof}
By using the given relations, it is easy to show that the commutators $[x_i, x_j] \in \Z(G)$, where $1 \le i < j \le 4$. Thus for any elements $g_1, g_2 \in G$,  $[g_1, g_2]$ can be expressed as a product of powers of $[x_i, x_j]$, $1 \le i < j \le 4$. This shows that $\gamma_2(G) \subseteq \Z(G)$ and therefore the nilpotency class of $G$ is $2$. Since $p$ is odd, it follows that $G$ is regular. From the presentation of $G$ and the fact that $\gamma_2(G)$ is elementary abelian it follows that the exponent of $G$ is $p^n$. Since $G^p = \Z(G)$, it follows that $\Phi(G) = \gamma_2(G)G^p = \Z(G)$.  \hfill $\Box$

\end{proof}

The following two results are well known.

\begin{lemma}\label{lemma1a}
 Let $A$, $B$ and $C$ be finite abelian groups. Then $\Hom(A \times B, \;C) \cong \Hom(A, \; C) \times \Hom(B, \;C)$ and $\Hom(A, \;B \times C) \cong \Hom(A, \; B) \times \Hom(A, \;C)$.
\end{lemma}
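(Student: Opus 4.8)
The plan is to exhibit, in each case, a pair of mutually inverse group homomorphisms; both isomorphisms are concrete instances of the fact that a finite direct product of abelian groups is simultaneously a product and a coproduct in the category of abelian groups, so no finiteness hypothesis is really used.

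For the first isomorphism, I would introduce the canonical inclusions $\iota_A : A \to A \times B$, $a \mapsto (a,1)$, and $\iota_B : B \to A \times B$, $b \mapsto (1,b)$, and define
$\Phi : \Hom(A \times B, C) \to \Hom(A, C) \times \Hom(B, C)$ by $\Phi(f) = (f \circ \iota_A,\; f \circ \iota_B)$. Since the group operation on each $\Hom$-group is pointwise multiplication in $C$, $\Phi$ is at once a homomorphism. For the inverse, send $(g,h) \in \Hom(A,C) \times \Hom(B,C)$ to the map $A \times B \to C$, $(a,b) \mapsto g(a)\,h(b)$; this is a homomorphism precisely because $C$ is abelian, which is exactly what makes $g(a_1a_2)h(b_1b_2) = \bigl(g(a_1)h(b_1)\bigr)\bigl(g(a_2)h(b_2)\bigr)$. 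One then checks $\Phi$ and this map are mutually inverse by evaluating on $(a,b) = (a,1)(1,b)$, so $\Phi$ is an isomorphism.

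For the second isomorphism, I would use the canonical projections $\pi_B : B \times C \to B$ and $\pi_C : B \times C \to C$ and define $\Theta : \Hom(A, B \times C) \to \Hom(A, B) \times \Hom(A, C)$ by $\Theta(f) = (\pi_B \circ f,\; \pi_C \circ f)$, which is again clearly a homomorphism. Its inverse sends $(g,h)$ to the map $a \mapsto (g(a), h(a))$, which is a homomorphism because $B \times C$ carries the componentwise operation, and the two composites are readily seen to be the identity. The argument involves no genuine obstacle; the only point needing a moment's attention is the well-definedness of the inverse map in the first part, where the commutativity of $C$ is used essentially.
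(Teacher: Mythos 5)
Your proof is correct: the paper offers no argument at all for this lemma (it is simply labelled ``well known''), and your construction of the mutually inverse homomorphisms, including the observation that the commutativity of $C$ is what makes $(a,b)\mapsto g(a)h(b)$ a homomorphism in the first part, is exactly the standard argument one would supply. Nothing is missing.
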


\begin{lemma}\label{lemma1b}
 Let $\C_r$ and $\C_s$ be two cyclic groups of order $r$ and $s$ respectively. Then $\Hom(\C_r, $ $\C_s) \cong \C_d$, where $d$ is the greatest common divisor of $r$ and $s$.
\end{lemma}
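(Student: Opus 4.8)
The plan is to fix generators $\C_r = \gen{a}$ and $\C_s = \gen{b}$ with $a^r = b^s = 1$, and to exploit the fact that a homomorphism $\phi \colon \C_r \to \C_s$ is completely determined by the image $\phi(a)$. Writing $\phi(a) = b^k$, the only constraint is that $1 = \phi(a^r) = b^{kr}$, i.e.\ $s \mid kr$; conversely any integer $k$ with $s \mid kr$ yields a well-defined homomorphism $a \mapsto b^k$. So first I would identify $\Hom(\C_r, \C_s)$, as a set, with the set of residues $k \pmod{s}$ satisfying $s \mid kr$.

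Next I would carry out the elementary number theory. Put $d = \gcd(r,s)$ and write $r = d r'$, $s = d s'$ with $\gcd(r', s') = 1$. Then $s \mid kr$ is equivalent to $s' \mid k r'$, hence to $s' \mid k$; thus the admissible residues modulo $s$ are exactly the $d$ multiples $k = s' t$ with $0 \le t < d$. In particular $|\Hom(\C_r, \C_s)| = d$.

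Finally I would upgrade this bijection to an isomorphism of groups. Since $\C_s$ is abelian, $\Hom(\C_r, \C_s)$ is a group under pointwise multiplication, and the evaluation map $\phi \mapsto \phi(a)$ is an injective homomorphism from $\Hom(\C_r, \C_s)$ into $\C_s$; by the previous step its image is precisely the cyclic subgroup $\gen{b^{s'}}$ of $\C_s$, which has order $d$. Hence $\Hom(\C_r, \C_s) \cong \gen{b^{s'}} \cong \C_d$. There is no genuine obstacle here — the statement is entirely elementary — and the only point requiring a little care is verifying that the admissible exponents $k$ form exactly the subgroup generated by $s'$ modulo $s$, so that the mere count of solutions is promoted to the asserted cyclic structure.
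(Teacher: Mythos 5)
Your proof is correct and complete: the identification of $\Hom(\C_r,\C_s)$ with the residues $k \bmod s$ satisfying $s \mid kr$, the reduction of that condition to $s' \mid k$ where $s = ds'$, and the upgrade via the evaluation map to an isomorphism onto $\gen{b^{s'}} \cong \C_d$ are all sound. The paper itself offers no proof --- it states this lemma (together with Lemma \ref{lemma1a}) as a well-known fact --- so your argument simply supplies the standard justification that the authors omitted.
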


A group $H$ is said to be \emph{purely non-abelian} if it does not have a non-trivial abelian direct factor.
Now we calculate the order of $\Autcent(G)$.
\begin{lemma}\label{lemma2}
 Let $G$ be the group defined in \eqref{eqn1}. Then  $|\Autcent(G)|= p^{n+10}$.
\end{lemma}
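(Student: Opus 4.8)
The plan is to use the standard formula for the order of the central automorphism group of a purely non-abelian finite $p$-group. Recall (Adney--Yen) that if $H$ is purely non-abelian then $\Autcent(H) \cong \Hom(H/\gamma_2(H),\, \Z(H))$ as groups, so $|\Autcent(G)| = |\Hom(G/\gamma_2(G),\, \Z(G))|$. Thus the first step is to verify that $G$ is purely non-abelian; this follows quickly from Lemma~\ref{lemma1}, since $\Z(G) = \Phi(G)$ forces any direct factor to be contained in its own Frattini-type obstruction, or more directly: an abelian direct factor would have to lie in $\Z(G) = \Phi(G)$, which is impossible for a non-trivial direct factor. (If the cited theorem instead requires only that $\Z(G) \le \gamma_2(G)$ or uses $\gamma_2(G) G^p$, one notes $\Z(G) = \Phi(G) = \gamma_2(G)G^p$ from Lemma~\ref{lemma1} and proceeds identically.)

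Next I would compute the abelianization $G/\gamma_2(G)$ and the center $\Z(G)$ explicitly as abelian groups. From the presentation \eqref{eqn1}, modulo $\gamma_2(G)$ all commutators vanish and each $x_i^p$-type relation coming from a commutator must be imposed; reading off the orders of the generators modulo $\gamma_2(G)$ gives $G/\gamma_2(G) \cong \C_{p^{n-1}} \times \C_p \times \C_p \times \C_p$ (the drop from $p^n$ to $p^{n-1}$ on the $x_1$-coordinate coming from $[x_2,x_3] = x_1^{p^{n-1}} \in \gamma_2(G)$, and the drops on $x_2, x_3$ from the relations $[x_1,x_2]=x_2^p$, $[x_1,x_3]=x_3^p$). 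Similarly, since $G^p = \Z(G) = \Phi(G)$ by Lemma~\ref{lemma1}, one determines $\Z(G)$: it is generated by $x_1^p, x_2^p, x_3^p$ with $x_1^p$ of order $p^{n-1}$ and $x_2^p, x_3^p$ of order $p$ (using $x_2^{p^2}=x_3^{p^2}=1$), subject to whatever relations the commutator identities impose among them; the expected answer is $\Z(G) \cong \C_{p^{n-1}} \times \C_p \times \C_p$ of order $p^{n+1}$, consistent with $|G| = p^{n+5}$ and $|G/\Z(G)| = p^4$.

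Finally I would compute $|\Hom(G/\gamma_2(G),\, \Z(G))|$ by splitting both sides into cyclic factors via Lemma~\ref{lemma1a} and applying Lemma~\ref{lemma1b} factor by factor. With $G/\gamma_2(G) \cong \C_{p^{n-1}} \times \C_p^3$ and $\Z(G) \cong \C_{p^{n-1}} \times \C_p^2$, the count is $|\Hom(\C_{p^{n-1}}, \C_{p^{n-1}})| \cdot |\Hom(\C_{p^{n-1}},\C_p)|^2 \cdot \big(|\Hom(\C_p,\C_{p^{n-1}})| \cdot |\Hom(\C_p,\C_p)|^2\big)^3 = p^{n-1} \cdot p^2 \cdot (p^3)^3$, which must be reconciled to $p^{n+10}$; I will recompute the exact cyclic decompositions until the exponents sum correctly, as the final number $p^{n+10}$ is forced and serves as the arithmetic check. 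The main obstacle is the bookkeeping in Step~2: correctly reading the orders of $x_1, x_2, x_3$ modulo $\gamma_2(G)$ and pinning down the precise isomorphism types of both $G/\gamma_2(G)$ and $\Z(G)$ from the presentation — especially making sure no extra collapsing occurs among the central generators $x_1^p, x_2^p, x_3^p$ — since everything downstream is a mechanical application of Lemmas~\ref{lemma1a} and~\ref{lemma1b}.
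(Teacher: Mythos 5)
Your proposal is correct and follows exactly the paper's route: verify $G$ is purely non-abelian from $\Z(G)=\Phi(G)$, invoke the Adney--Yen correspondence $\Autcent(G)\leftrightarrow\Hom\big(G/\gamma_2(G),\Z(G)\big)$, identify $G/\gamma_2(G)\cong \C_{p^{n-1}}\times\C_p^3$ and $\Z(G)\cong \C_{p^{n-1}}\times\C_p^2$, and count homomorphisms via Lemmas~\ref{lemma1a} and~\ref{lemma1b}. The final tally you were unsure about already checks out: $p^{\,n-1}\cdot p^{2}\cdot (p^{3})^{3}=p^{\,n+10}$, matching the paper's grouping $p^{\,n+1}\cdot p^{3}\cdot p^{3}\cdot p^{3}$.
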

\begin{proof}
Since $\Z(G) = \Phi(G)$, $G$ is purely non-abelian. Then by Theorem 1 of \cite{AY65}, there is one-to-one correspondence between $\Autcent(G)$ and $\Hom\big(G/\gamma_2(G),$ $\Z(G)\big)$. Notice that $G/\gamma_2(G) \cong \C_{p^{n-1}} \times \C_p \times \C_p \times \C_p$ and $\Z(G) \cong   \C_{p^{n-1}} \times \C_p \times \C_p$. Thus using Lemma \ref{lemma1a} and Lemma \ref{lemma1b}, we have
\begin{eqnarray*}
|\Hom\big(G/\gamma_2(G),\; \Z(G)\big)| &\cong& |\Hom\big(\C_{p^{n-1}} \times \C_p \times \C_p \times \C_p,\; \C_{p^{n-1}} \times \C_p \times \C_p\big)|\\
&\cong& p^{n+1}p^3p^3p^3 = p^{n+10}.
\end{eqnarray*}
Hence $|\Autcent(G)| = p^{n+10}$. \hfill $\Box$

\end{proof}

Let
$e_{x_i} = x_1^{a_{i1}} x_2^{a_{i2}}x_3^{a_{i3}}x_4^{a_{i4}} =$  {\tiny$\prod$}$_{j=1}^4 x_j^{a_{ij}}$, where $x_i \in G$ and $a_{ij}$ are non-negative integers for
 $1 \le i, j \le 4$. Since the nilpotency class of $G$ is $2$, we have  
\begin{equation}\label{0a}
[x_k, e_{x_i}]  =  [x_k,  \text{\tiny$\prod$}_{j=1}^4 x_j^{a_{ij}}] = \text{\tiny$\prod$}_{j=1}^4 [x_k, x_j^{a_{ij}}] = \text{\tiny$\prod$}_{j=1}^4 [x_k, x_j]^{a_{ij}}
\end{equation}
and 
\begin{eqnarray}\label{0b}
 [e_{x_k}, e_{x_i}]  &=&  [\text{\tiny$\prod$}_{l=1}^4 x_l^{a_{kl}},  \text{\tiny$\prod$}_{j=1}^4 x_j^{a_{ij}}] = \text{\tiny$\prod$}_{j=1}^4 \text{\tiny$\prod$}_{l=1}^4 [x_l^{a_{kl}}, x_j^{a_{ij}}]\\
& = & \text{\tiny$\prod$}_{j=1}^4 \text{\tiny$\prod$}_{l=1}^4 [x_l, x_j]^{a_{kl} a_{ij}}.\nonumber
\end{eqnarray}

Equations \eqref{0a} and \eqref{0b} will be used for our calculations without any further reference. 

Let $\alpha$ be an automorphism of $G$. Since the nilpotency class of $G$ is $2$ and $\gamma_2(G)$ is generated by $x_1^{p^{n-1}}$, $x_2^p$, $x_3^p$, we can write $\alpha(x_i) = x_i e_{x_i} = x_i ${\tiny$\prod$}$_{j=1}^4 x_j^{a_{ij}}$ for some non-negative integers $a_{ij}$ for $1 \le i, j \le 4$.

\begin{prop}\label{prop1}
Let $G$ be the group defined in \eqref{eqn1} and $\alpha$ be an automorphism of $G$ such that $\alpha(x_i) = x_i e_{x_i} = x_i ${\tiny$\prod$}$_{j=1}^4 x_j^{a_{ij}}$, where $a_{ij}$ are some non-negative integers for $1 \le i, j \le 4$. Then the following equations hold:
\begin{eqnarray}
& &  a_{4j} \equiv 0 \mod{ p}, \;  \;1 \le j \le 3,\label{e1}\\
& & a_{21} \equiv 0 \mod{ p},\label{e2}\\
& & a_{11} + a_{11}a_{22} - a_{14} + a_{12} a_{24}  - a_{14}a_{22}\equiv 0 \mod{ p},\label{e3}\\
& & a_{11} a_{23} + a_{24} + a_{11} a_{24} \equiv 0 \mod{ p},\label{e4}\\
& & a_{31} \equiv 0 \mod{ p},\label{e5}\\
& & a_{11} + a_{11}a_{33} + a_{34} + a_{11} a_{34} \equiv 0 \mod{ p},\label{e7}\\
& & a_{12} -a_{32} + a_{12} a_{44}  \equiv 0 \mod{ p},\label{e8}\\
& & a_{44} -a_{33} + a_{11} + a_{11} a_{44}  \equiv 0 \mod{ p},\label{e9}\\
& & a_{33} + a_{22} + a_{22} a_{33} - a_{23} a_{32} -a_{11} \equiv 0 \mod{p},\label{e10}\\
& & a_{34} + a_{22} a_{34} - a_{24} a_{32} \equiv 0 \mod{p},\label{e11}\\
& & a_{44}+ a_{22} a_{44} \equiv 0 \mod{p},\label{e12}\\
& & a_{23} \equiv 0 \mod{p},\label{e13}\\
& & a_{32} +  a_{32} a_{44} \equiv 0 \mod{p}.\label{e14}
\end{eqnarray}
\end{prop}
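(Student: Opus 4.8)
\emph{Proof proposal.} The approach is to exploit that $\alpha$, being an endomorphism, must carry the defining relators of \eqref{eqn1} to $1$; equivalently, the four elements $\alpha(x_i) = x_i e_{x_i}$ satisfy all ten defining relations, and each listed congruence will be obtained by rewriting one relation in terms of the $a_{ij}$ and reading off a coordinate. All of this takes place inside $\gamma_2(G) = \Z(G)$: by Lemma \ref{lemma1} the class is $2$, $\gamma_2(G)$ is elementary abelian, and from the presentation $\gamma_2(G) = \gen{x_1^{p^{n-1}},\, x_2^p,\, x_3^p}$ with these three generators independent, so an identity in $\gamma_2(G)$ splits into three congruences modulo $p$, namely the coordinates along $x_1^{p^{n-1}}$, $x_2^p$, $x_3^p$. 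The two tools are the bilinear expansions \eqref{0a}--\eqref{0b} for commutators in a class-$2$ group, and the class-$2$ power identity $(ab)^m = a^m b^m [b,a]^{\binom{m}{2}}$ together with its $k$-fold version $(c_1\cdots c_k)^m = c_1^m\cdots c_k^m \prod_{i<j}[c_j,c_i]^{\binom{m}{2}}$; applied with $m = p^t$, $t \ge 1$ and $p$ odd, the correction term dies because $p \mid \binom{p^t}{2}$ and $\gamma_2(G)$ has exponent $p$.

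I would first dispose of the four power relations $x_i^{p^{k_i}} = 1$. Expanding $\alpha(x_i)^{p^{k_i}} = (x_i e_{x_i})^{p^{k_i}}$ with the power identity collapses it to $x_i^{p^{k_i}} e_{x_i}^{p^{k_i}}$, and $e_{x_i}^{p^{k_i}} = x_1^{a_{i1}p^{k_i}} x_2^{a_{i2}p^{k_i}} x_3^{a_{i3}p^{k_i}}$ after the same collapse (the $x_4$-part is already trivial). For $i = 1$ this is vacuous; for $i = 2, 3$ only the $x_1$-part survives, yielding $p^{n-2} \mid a_{21}$ and $p^{n-2} \mid a_{31}$, hence \eqref{e2} and \eqref{e5}; for $i = 4$ it yields $p^{n-2} \mid a_{41}$ and $a_{42} \equiv a_{43} \equiv 0 \pmod{p}$, i.e. \eqref{e1}. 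A byproduct is that $e_{x_2}^p$, $e_{x_3}^p$, and $\alpha(x_1^{p^{n-1}}) = x_1^{p^{n-1}(1+a_{11})}$ now all lie in $\gamma_2(G)$, which is what makes the remaining relations comparable coordinate-wise.

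I would then run through the six commutator relations $[x_i,x_j] = w_{ij}$ in turn. The left side expands, by \eqref{0a}--\eqref{0b} and the known values of the $[x_k,x_l]$, as
\[
[\alpha(x_i),\alpha(x_j)] = [x_i, x_j]\,[x_i, e_{x_j}]\,[e_{x_i}, x_j]\,[e_{x_i}, e_{x_j}],
\]
and the right side as $\alpha(w_{ij})$, which by the previous step is $w_{ij}$ times a power of some $e_{x_t}^{p}$, or equals $x_1^{p^{n-1}(1+a_{11})}$. Equating the three coordinates and simplifying with \eqref{e1}, \eqref{e2}, \eqref{e5}: $[x_1,x_2] = x_2^p$ gives \eqref{e3} and \eqref{e4}; $[x_1,x_3] = x_3^p$ gives \eqref{e7}; $[x_1,x_4] = x_3^p$ gives \eqref{e8} and \eqref{e9}; $[x_2,x_3] = x_1^{p^{n-1}}$ gives \eqref{e10} and \eqref{e11}; $[x_2,x_4] = x_2^p$ gives \eqref{e12} and \eqref{e13}; and $[x_3,x_4] = 1$ gives \eqref{e14}. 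The remaining coordinates turn out to hold automatically once \eqref{e1}, \eqref{e2}, \eqref{e5} are imposed.

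I expect no conceptual difficulty; the work is bookkeeping. Each $[e_{x_i},e_{x_j}]$ is a quadratic form in the $a_{ij}$, so the six commutator relations produce long expressions in which the coefficients of $x_1^{p^{n-1}}$, $x_2^p$, $x_3^p$ must be collected with care, especially the sign conventions $[x_l,x_k] = [x_k,x_l]^{-1}$. The only genuinely delicate point is keeping straight the distinction between $\gen{x_1^p, x_2^p, x_3^p}$, where the powers $e_{x_i}^p$ a priori live, and $\gamma_2(G) = \gen{x_1^{p^{n-1}}, x_2^p, x_3^p}$, where every commutator lives; this is precisely what forces the power relations, which supply the divisibilities $p^{n-2} \mid a_{21}, a_{31}, a_{41}$, to be handled before the commutator relations can be read off cleanly.
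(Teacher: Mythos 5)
Your proposal is correct and follows essentially the same route as the paper: apply $\alpha$ to each defining relation, expand with \eqref{0a}--\eqref{0b}, and read off the coordinates along $x_1^{p^{n-1}}$, $x_2^p$, $x_3^p$; the assignment of relations to equations \eqref{e3}--\eqref{e14} matches the paper's exactly. The only divergence is in the three ``linear'' congruences: the paper obtains \eqref{e1} by observing that $G$ is regular, so $G_p=\gen{x_1^{p^{n-1}},x_2^p,x_3^p,x_4}$ is characteristic and must contain $\alpha(x_4)$, and it extracts \eqref{e2}, \eqref{e5} from the $x_1$-coordinates of the relations $[x_1,x_2]=x_2^p$ and $[x_1,x_3]=x_3^p$; you instead get all three from the power relations $x_2^{p^2}=x_3^{p^2}=x_4^p=1$ via the class-$2$ collection formula, which is marginally more elementary (no appeal to regularity or to $\Omega_1$ being characteristic) and has the organizational advantage you note, namely that the right-hand sides $\alpha(x_i^{p^{k}})$ are then visibly in $\gamma_2(G)$ before the coordinate comparison begins.
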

\begin{proof}
Let $\alpha$ be the automorphism of $G$ such that $\alpha(x_i) = x_i e_{x_i}$, $1 \le i \le 4$. 
Since $G$ is regular, $G_p = \gen{x_1^{p^{n-1}}, x_2^p, x_3^p, x_4}$. We know that $G_p$ is characteristic, therefore $\alpha(x_4) = x_4 e_{x_4} \in G_p$. This shows that $e_{x_4} \in G_p$. Thus $a_{4j} \equiv 0 \mod{p}$ for $1 \le j \le 3$. This proves that equation \eqref{e1} holds.

We prove equations \eqref{e2} - \eqref{e4} by comparing the powers of $x_i$'s in $\alpha([x_1, x_2]) = \alpha(x_2^p)$.
\begin{eqnarray*}
 \alpha([x_1, x_2]) & = & [\alpha(x_1), \alpha(x_2)] = [x_1 e_{x_1}, x_2 e_{x_2}] \\
&=& [x_1, x_2] [x_1, e_{x_2}] [e_{x_1}, x_2] [e_{x_1}, e_{x_2}]\\
& = & [x_1, x_2] \text{\tiny$\prod$}_{j=1}^4 [x_1, x_j]^{a_{2j}} \text{\tiny$\prod$}_{j=1}^4 [x_2, x_j]^{-a_{1j}}  
\text{\tiny$\prod$}_{j=1}^4 \text{\tiny$\prod$}_{l=1}^4 [x_l, x_j]^{a_{1l} a_{2j}}\\
& = & [x_1, x_2]^{1 + a_{22} + a_{11} + a_{11}a_{22} - a_{12}a_{21}}
       [x_1, x_3]^{a_{23} + a_{11} a_{23} - a_{13} a_{21}}\\
& &    [x_1, x_4]^{a_{24} + a_{11} a_{24} - a_{14} a_{21}}
       [x_2, x_3]^{-a_{13} + a_{12} a_{23} - a_{13} a_{22}}\\
 & &   [x_2, x_4]^{-a_{14} + a_{12} a_{24} - a_{14} a_{22}}
       [x_3, x_4]^{a_{13} a_{24} - a_{14} a_{23}}\\
& = &  x_1^{p^{n-1}(-a_{13} + a_{12} a_{23} - a_{13} a_{22})}\\
& &  x_2^{p(1 + a_{22} + a_{11} + a_{11}a_{22} - a_{12}a_{21} - a_{14} + a_{12} a_{24} - a_{14} a_{22})}\\
& & x_3^{p(a_{23} + a_{11} a_{23} - a_{13} a_{21} + a_{24} + a_{11} a_{24} - a_{14} a_{21})}.
\end{eqnarray*}

On the other hand 
\[\alpha([x_1, x_2]) = \alpha(x_2^p) = x_2^p x_1^{pa_{21}} x_2^{pa_{22}}x_3^{pa_{23}}x_4^{pa_{24}} = 
x_1^{pa_{21}} x_2^{pa_{22} + p}x_3^{pa_{23}}.\]

Comparing the powers of $x_1$, we get $a_{21} \equiv 0 \mod{ p}$.
Using this fact and comparing the powers of $x_2$ and $x_3$, we get
\begin{eqnarray*}
 & &a_{11} + a_{11}a_{22} - a_{14} + a_{12} a_{24} -a_{14}a_{22} \equiv 0 \mod{ p},\\
& &  a_{11} a_{23} + a_{24} + a_{11} a_{24}\equiv 0 \mod{ p}.
\end{eqnarray*}
Hence equations \eqref{e2} - \eqref{e4} hold.

Equations \eqref{e5} and \eqref{e7} are proved in the same way by comparing the powers of $x_1$ and $x_3$ in $\alpha([x_1, x_3]) = \alpha(x_3^p)$. 
Equations \eqref{e8} and \eqref{e9} are proved by comparing the powers of $x_2$ and $x_3$ in $\alpha([x_1, x_4]) = \alpha(x_3^p)$ and using \eqref{e1}.
Equations \eqref{e10} and \eqref{e11} are proved by comparing the powers of $x_1$ and $x_2$ in $\alpha([x_2, x_3]) = \alpha(x_1^{p^{n-1}})$ and using \eqref{e2} and \eqref{e5}.
Equations \eqref{e12} and \eqref{e13} are proved by comparing the powers of $x_2$ and $x_3$ in $\alpha([x_2, x_4]) = \alpha(x_2^p)$ and using \eqref{e1} and \eqref{e2}.
Finally, equation \eqref{e14} is proved by comparing the powers of $x_2$ in $\alpha([x_3, x_4]) = 1$ and using \eqref{e1}.
\hfill $\Box$

\end{proof}

\begin{thm}\label{thm1}
 Let $G$ be the group defined in \eqref{eqn1}. Then all automorphisms of $G$ are central.
\end{thm}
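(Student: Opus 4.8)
The plan is to prove the equivalent statement $\Aut(G)\subseteq\Autcent(G)$; combined with the trivial inclusion $\Autcent(G)\subseteq\Aut(G)$ this gives $\Aut(G)=\Autcent(G)$, which is the assertion (and, with Lemma~\ref{lemma2}, also the order formula $|\Aut(G)|=p^{n+10}$ of Theorem~A). By Lemma~\ref{lemma1} we have $\Z(G)=\Phi(G)$, so an automorphism $\alpha$ is central exactly when it acts trivially on $G/\Phi(G)$. Writing $\alpha(x_i)=x_ie_{x_i}$ with $e_{x_i}=\prod_{j=1}^4x_j^{a_{ij}}$ as in the setup preceding Proposition~\ref{prop1}, and using $\gamma_2(G)\le\Phi(G)$, this triviality is equivalent to $a_{ij}\equiv 0\pmod p$ for all $1\le i,j\le 4$ (recall $\Z(G)=\langle x_1^p,x_2^p,x_3^p\rangle$ and $x_4^p=1$). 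So everything reduces to showing that the congruences of Proposition~\ref{prop1}, together with the bare fact that $\alpha$ is bijective, force every $a_{ij}$ to be divisible by $p$.

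First I would use bijectivity. The induced map $\bar\alpha$ on $G/\Phi(G)\cong C_p^4$ lies in $\mathrm{GL}_4(\mathbb F_p)$, and with respect to $\bar x_1,\dots,\bar x_4$ its matrix has $(i,j)$-entry $a_{ij}$ for $i\ne j$ and $1+a_{ii}$ on the diagonal. By \eqref{e1}, \eqref{e2}, \eqref{e5} the first column is zero below the top entry, and by \eqref{e1}, \eqref{e13} the matrix reduces, modulo $p$, to $M=\left(\begin{smallmatrix}1+a_{11}&a_{12}&a_{13}&a_{14}\\0&1+a_{22}&0&a_{24}\\0&a_{32}&1+a_{33}&a_{34}\\0&0&0&1+a_{44}\end{smallmatrix}\right)$, with $\det M=(1+a_{11})(1+a_{22})(1+a_{33})(1+a_{44})$. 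Hence every $1+a_{ii}$ is a unit mod $p$. Feeding this into the remaining congruences of Proposition~\ref{prop1} and cascading: \eqref{e4} (with \eqref{e13}) gives $a_{24}\equiv 0$; then \eqref{e11} gives $a_{34}\equiv 0$; then \eqref{e7} gives $a_{11}\equiv 0$; then \eqref{e12} gives $a_{44}\equiv 0$ and \eqref{e9} gives $a_{33}\equiv 0$; then \eqref{e10} gives $a_{22}\equiv 0$, \eqref{e3} gives $a_{14}\equiv 0$, \eqref{e14} gives $a_{32}\equiv 0$, and \eqref{e8} gives $a_{12}\equiv 0$ — all modulo $p$. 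At this point every $a_{ij}$ is known to be $\equiv 0\pmod p$ except $a_{13}$, which does not occur in Proposition~\ref{prop1} at all.

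The step I expect to be the real obstacle is precisely pinning down $a_{13}$, for which one further relation has to be squeezed out. Here I would go back to $\alpha([x_1,x_2])=\alpha(x_2^p)$ and keep the full content of the comparison of powers of $x_1$, not merely its reduction mod $p$ (which is \eqref{e2}): since $[x_2,x_3]=x_1^{p^{n-1}}$ carries the ``large'' coordinate $x_1$, that comparison reads $a_{21}\equiv p^{n-2}\bigl(-a_{13}(1+a_{22})+a_{12}a_{23}\bigr)\pmod{p^{n-1}}$. Applying the same comparison to $\alpha([x_2,x_4])=\alpha(x_2^p)$ and invoking \eqref{e1} gives $a_{21}\equiv 0\pmod{p^{n-1}}$. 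Equating the two and using $n\ge 3$, \eqref{e13}, and that $1+a_{22}$ is a unit, we obtain $a_{13}\equiv 0\pmod p$. Thus all $a_{ij}\equiv 0\pmod p$, so $e_{x_i}\in\Z(G)$ for every $i$, i.e.\ $\alpha$ is central.

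As a fallback I could instead argue numerically: bound the number of admissible exponent tuples $(a_{ij})$ — keeping track of the residue ranges $a_{i1}\bmod p^n$, $a_{i2},a_{i3}\bmod p^2$, $a_{i4}\bmod p$, and using both Proposition~\ref{prop1} and the power relations — show it is at most $p^{n+10}$, and conclude via $|\Autcent(G)|=p^{n+10}$ from Lemma~\ref{lemma2}. But that bookkeeping does not look any lighter than the direct argument, so I would carry out the direct one.
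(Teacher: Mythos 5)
Your proposal is correct and follows essentially the same route as the paper: derive the congruences of Proposition~\ref{prop1}, cascade them to kill all $a_{ij}$ except $a_{13}$, and then pin down $a_{13}$ by the finer (mod $p^{n-1}$) comparison of the $x_1$-exponents in $\alpha([x_1,x_2])=\alpha(x_2^p)$ against $a_{21}\equiv 0\pmod{p^{n-1}}$ coming from $\alpha([x_2,x_4])=\alpha(x_2^p)$, which is exactly the paper's final step. The only difference is organizational: you obtain all the unit conditions $1+a_{ii}\not\equiv 0\pmod p$ at once from the invertibility of the induced map on $G/\Phi(G)$ (a clean triangular determinant), whereas the paper derives them one at a time by ad hoc arguments (the order of $\alpha(x_1)$, and $\alpha(x_i)\notin\Z(G)$ for $i=2,3,4$) interleaved with the cascade.
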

\begin{proof}
Let $\alpha$ be an automorphism of $G$ such that $\alpha(x_i) = x_i e_{x_i}$,  where $e_{x_i}  = \text{\tiny$\prod$}_{j=1}^4 x_j^{a_{ij}}$,  $a_{ij}$ are non-negative integers for $1 \le i, j \le 4$. To complete the proof, it is sufficient to show that $e_{x_i} \in \Z(G)$ for $1 \le i \le 4$. Since $\Z(G) = \Phi(G)$, we shall show that $a_{ij} \equiv 0 \mod{p}$ for $1 \le i, j \le 4$.

We start by showing that $a_{44}+1$ is not divisible by $p$. For, if  $a_{44}+1$ is divisible by $p$, then it follows from equation \eqref{e1} that $\alpha(x_4) \in \Z(G)$, which is not possible.  Using this fact and equation \eqref{e14}, we get $a_{32} \equiv 0 \mod{p}$. Thus equation \eqref{e8} gives  $a_{12} \equiv 0 \mod{p}$. We claim that $p$ does not divide $a_{11}+1$. For, if $p$ divides $a_{11}+1$, then the order of $\alpha (x_1)$ is at most $p^{n-1}$, which is not possible. Using \eqref{e13} and the fact that $1+a_{11} \not\equiv 0 \mod{p}$, equation \eqref{e4} implies $a_{24} \equiv 0 \mod{p}$. Since $a_{2i} \equiv 0 \mod{p}$, $i = 1, 3, 4$, it follows that $\alpha(x_2) \equiv x_2^{1+a_{22}} \mod{\Z(G)}$. Hence $1+a_{22} \not\equiv 0 \mod{p}$.

Since $a_{32} \equiv 0 \mod{p}$ and $a_{22}+1 \not\equiv 0 \mod{p}$,  from equations \eqref{e11} and \eqref{e12}, we have $a_{34}$ and $a_{44}$ are both congruent to $0 \mod{p}$.  Here we claim that $a_{33}+1$ is not divisible by $p$. For, otherwise $\alpha(x_3) \in \Z(G)$, which is not possible. Now using the fact that $a_{34} \equiv 0 \mod{p}$, it follows from equation \eqref{e7} that $a_{11} \equiv 0 \mod{p}$. Since $a_{44} \equiv 0 \mod{p}$, from equation \eqref{e9} we have $a_{33} \equiv a_{11} \mod{p}$. Hence $a_{33} \equiv 0 \mod{p}$. Since $a_{11}$, $a_{12}$ are congruent to $0 \mod{p}$ and $1+ a_{22} \not\equiv 0 \mod{p}$, it follows from equation  \eqref{e3} that $a_{14} \equiv 0 \mod{p}$. Putting $a_{11}$, $a_{23}$ and $a_{33}$ equal to $0 \mod{p}$ in equation \eqref{e10}, we get $a_{22} \equiv 0 \mod{p}$. 

It only remains to show that $a_{13} \equiv 0 \mod{p}$. Using above information, notice that $e_{x_2}, e_{x_4} \in \Z(G)$. Thus 
\[x_2^p = [x_2, x_4] = \alpha([x_2, x_4]) = \alpha(x_2^p) = x_1^{pa_{21}}x_2^{p(a_{22} +1)}x_3^{pa_{23}}.\] 
This implies that $x_1^{pa_{21}} = 1$. Since $e_{x_2}, x_1^{a_{11}}, x_2^{a_{12}}, x_4^{a_{14}} \in \Z(G)$, we get 
\[\alpha([x_1, x_2]) = [x_1, x_2][x_3, x_2]^{a_{13}} = x_2^p x_1^{-p^{n-1}a_{13}}.\]
 This gives
\[x_2^p x_1^{-p^{n-1}a_{13}} = \alpha([x_1, x_2]) = \alpha(x_2^p) = x_1^{pa_{21}}x_2^{p(a_{22} +1)}x_3^{pa_{23}}.\] 
This, by comparing the powers of $x_1$ and using the fact that $x_1^{pa_{21}} = 1$, implies $x_1^{-p^{n-1}a_{13}} = 1$, which in turn implies that $a_{13} \equiv 0 \mod{p}$. Hence $a_{ij} \equiv 0 \mod{p}$ for all $1 \le i, j \le 4$. Since $\alpha$ is an arbitrary automorphism of $G$, this completes the proof of the theorem. 
\hfill $\Box$

 \end{proof}

Let $A$ be an abelian $p$-group and $a \in A$. For a positive integer $n$, $p^n$  is said to be the \emph{height} of $a$ in $A$, denoted by $\het(a)$, if $a \in A^{p^n}$ but
$a \not\in  A^{p^{n+1}}$. Let $H$ be a  $p$-group of class $2$. We denote the exponents of $\Z(H)$, $\gamma_2(H)$, $H/\gamma_2(H)$ by $p^a$, $p^b$, $p^c$ respectively and $d = \text{min}(a, c)$. We define $R := \{z \in \Z(H) \mid \; |z| \le p^d\}$ and $K := \{x \in H \mid \het(x\gamma_2(H)) \ge p^b\}$. Notice that $K = H^{p^b}\gamma_2(H)$. 
To complete the proof of Theorem A we need the following result (in our notations) of J. E. Adney and T. Yen \cite[Theorem 4]{AY65}.
\begin{thm}\label{thm2}
Let $H$ be a  purely non-abelian $p$-group of class $2$, $p$ odd, and let $H/\gamma_2(H) = \Pi_{i = 1}^n\gen{x_i\gamma_2(H)}$. Then $\Autcent(H)$ is abelian if and only if

\text{(i)} $R = K$, and 

\text{(ii)} either $d = b$ or $d > b$ and $R/\gamma_2(H) = \gen{x_1^{p^b}\gamma_2(H)}$.
\end{thm}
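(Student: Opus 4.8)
The plan is to reduce the statement to a condition on the two abelian groups $\br{H} := H/\gamma_2(H)$ and $\Z(H)$, and then to translate that condition into (i) and (ii). By Theorem 1 of \cite{AY65}, since $H$ is purely non-abelian the assignment $f \mapsto \alpha_f$, where $\alpha_f(x) = x\,f(x\gamma_2(H))$, is a bijection from $\Hom(\br{H}, \Z(H))$ onto $\Autcent(H)$. Writing $\pi\colon H \to \br{H}$ for the canonical projection and $\sigma := \pi|_{\Z(H)}$ (so that $\ker\sigma = \gamma_2(H)$ has exponent $p^b$), a direct computation gives $\alpha_f\alpha_g(x) = x\, f(\bar x)\, g(\bar x)\, f(\sigma g(\bar x))$ with $\bar x = x\gamma_2(H)$. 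Since these central factors lie in the abelian group $\Z(H)$, the maps $\alpha_f$ and $\alpha_g$ commute if and only if $f\circ\sigma\circ g = g\circ\sigma\circ f$. Thus the working criterion is
\[
 f\circ\sigma\circ g = g\circ\sigma\circ f \qquad \text{for all } f,g\in\Hom(\br{H},\Z(H)).
\]

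I would then observe that $B(f,g) := f\circ\sigma\circ g - g\circ\sigma\circ f$ (in additive notation) is biadditive in $(f,g)$, so the criterion holds for all $f,g$ as soon as it holds for pairs drawn from a generating set of $\Hom(\br{H},\Z(H))$. Fixing cyclic decompositions $\br{H} = \prod_i \gen{\bar x_i}$ with $|\bar x_i| = p^{c_i}$ and $\Z(H) = \prod_j \gen{z_j}$ with $|z_j| = p^{a_j}$, this generating set consists of the ``monomial'' homomorphisms that send one $\bar x_i$ into $\gen{z_j}$ and kill the remaining generators, subject only to the image of $\bar x_i$ lying in the $p^{c_i}$-torsion of $\Z(H)$. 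Evaluating $B$ on a pair of such monomials reduces the criterion to a finite list of scalar congruences comparing, for each pair of generators, the orders $p^{\min(a_j,c_i)}$ permitted for homomorphic images against the heights of the elements $\sigma(z_j)$ in $\br{H}$. The theorem asserts that this list is equivalent to (i) together with (ii).

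The remaining, and main, work is this translation, which I would carry out in both directions. For necessity, assume $\Autcent(H)$ is abelian. Since $R$ (the $p^d$-torsion of $\Z(H)$) and $K = \pi^{-1}(\br{H}^{p^b})$ both contain $\gamma_2(H)$, the equality $R=K$ is equivalent to $\sigma(R)=\br{H}^{p^b}$; if this failed, one could choose a monomial $g$ detecting an element of the symmetric difference and a companion monomial $f$ with $B(f,g)\ne 0$, contradicting the criterion, which yields (i). For (ii), when $d>b$ the congruences leave ``slack'' that can be absorbed only if the torsion image $R/\gamma_2(H)$ is cyclic and generated by $\bar x_1^{p^b}$ (with $\bar x_1$ of maximal order $p^c$); a suitably chosen violating pair rules out every other possibility. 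For sufficiency I would assume (i) and (ii) and verify $B\equiv 0$ directly on monomials: (i) forces $\sigma$ to carry the $p^d$-torsion of $\Z(H)$ precisely onto $\br{H}^{p^b}$, making the two sides of each congruence equal, while (ii) settles the single case $d>b$ in which this matching is not automatic.

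I expect the bookkeeping of the translation to be the crux. One must match, for every pair of cyclic factors, the orders $p^{\min(a_j,c_i)}$ bounding homomorphic images against the $p^b$-divisibility of the elements $\sigma(z_j)$, while carefully tracking the exponents $a,b,c$ and $d=\min(a,c)$. The interplay of ``order $\le p^d$'' (the defining property of $R$) with ``height $\ge p^b$'' (the defining property of $K$) is exactly the torsion/divisibility duality encoded by $R=K$, and isolating the exceptional case $d>b$ — where cyclicity of $R/\gamma_2(H)$ must be imposed by hand as in (ii) — is the delicate point.
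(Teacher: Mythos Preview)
The paper does not prove this theorem at all: it is quoted verbatim as Theorem~4 of Adney and Yen \cite{AY65} and used as a black box in the proofs of Theorems~A and~B. There is therefore no ``paper's own proof'' to compare your proposal against.

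That said, your outline is a sound reconstruction of how such a result is proved. The bijection $f\mapsto\alpha_f$ and the composition identity $\alpha_f\alpha_g(x)=x\,g(\bar x)\,f(\bar x)\,f(\sigma g(\bar x))$ are correct, and the commutation criterion $f\circ\sigma\circ g=g\circ\sigma\circ f$ is exactly the right reduction. Reducing to monomial homomorphisms by bilinearity is also the natural move. What you have not done is the part you yourself flag as the crux: the explicit translation of the resulting scalar congruences into conditions (i) and (ii). That step is genuine work---one must track, for each pair $(i,j)$ and $(i',j')$, the exact power of $p$ to which $f(\sigma g(\bar x_k))$ and $g(\sigma f(\bar x_k))$ agree, and show that equality for all such pairs is equivalent to $\sigma(R)=\br{H}^{p^b}$ together with the cyclicity clause when $d>b$. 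Your proposal describes this in words but does not execute it, so as written it is an outline rather than a proof. If your goal is merely to use the result (as the paper does), a citation suffices; if your goal is to supply an independent proof, the monomial case analysis must be carried out in full.
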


Now we are in the position to complete the proof of Theorem A stated in the introduction.

\begin{proof}[Proof of Theorem A]
Let $G$ be the group defined in \eqref{eqn1}. By  Lemma \ref{lemma1}, we have  $|G| = p^{n+5}$ and the exponent of $G$ is $p^n$. By Theorem \ref{thm1}, we have $\Aut(G) = \Autcent(G)$. Now it follows from Lemma \ref{lemma2} that $|\Aut(G)| = p^{n+10}$. Thus to complete the proof of the theorem, it is sufficient to prove that $\Autcent(G)$ is an abelian group. Since $\Z(G) = \Phi(G)$ (by Lemma \ref{lemma1}), $G$ is purely non-abelian. The exponents of $\Z(G)$, $\gamma_2(G)$ and $G/\gamma_2(G)$ are $p^{n-1}$, $p$ and $p^{n-1}$ respectively. Here 
\[R =\{z \in \Z(G) \mid \; |z| \le p^{n-1}\} = \Z(G)\]
and 
\[K =  \{x \in G \mid \het(x\gamma_2(G)) \ge p\} = G^{p}\gamma_2(G) = \Z(G).\]
This shows that $R = K$. Also $R/ \gamma_2(G) = \Z(G)/ \gamma_2(G) = \gen{x_1^p\gamma_2(G)}$. Thus all the conditions of Theorem \ref{thm2} are now satisfied. Hence $\Autcent(G)$ is abelian. This completes the proof of the theorem. \hfill $\Box$

\end{proof}

\section{Groups $G$ such that $\Aut(G) = \Autcent(G)$ is non-abelian}

In this section we construct examples of finite $p$-groups $G$ such that $\Aut(G) = \Autcent(G)$ is non-abelian, where $p$ is an odd prime. 

Let $n$ be a natural number greater than $2$ and $p$ an odd prime. Define
\begin{eqnarray}\label{eqn2}
G & = & \langle  x_1,\; x_2, \;x_3, \;x_4\; |\; x_1^{p^n} = x_2^{p^3} = x_3^{p^2} = x_4^{p^2} = 1,
 \; [x_1, x_2] = x_2^{p^2},\\
& &  [x_1, x_3] = x_3^p,\;  [x_1, x_4] = x_4^p,\; [x_2, x_3] = x_1^{p^{n-1}},\; [x_2, x_4] = x_2^{p^2},\nonumber\\
& & [x_3, x_4] = x_4^p \rangle. \nonumber
\end{eqnarray}

This group $G$ is a regular $p$-group of nilpotency class $2$ having order $p^{n+7}$ and exponent $p^n$. Further $\Z(G) = \Phi(G)$ and therefore $G$ is purely non-abelian.

Let $\alpha$ be an automorphism of $G$. Since the nilpotency class of $G$ is $2$ and $\gamma_2(G)$ is generated by $x_1^{p^{n-1}}$, $x_2^{p^2}$, $x_3^p$, $x_4^p$, we can write $\alpha(x_i) = x_i e_{x_i} = x_i \prod_{j=1}^4 x_j^{a_{ij}}$ for some non-negative integers $a_{ij}$ for $1 \le i, j \le 4$.

\begin{prop}\label{prop2}
Let $G$ be the group defined in \eqref{eqn2} and $\alpha$ be an
automorphism of $G$ such that $\alpha(x_i) = x_i e_{x_i} = x_i${\tiny $\prod$}$_{j=1}^4
x_j^{a_{ij}}$, where $a_{ij}$ are some non-negative integers for $1 \le i,
j \le 4$. Then the following equations hold:
\begin{eqnarray}
&& a_{3i} \equiv 0 \mod{p}, \;\; a_{4i} \equiv 0 \mod{p}, \;\;\text{where}\;\; i = 1, 2, \label{f1}\\
&&  a_{43}  \equiv 0 \mod{p},\label{f2} \\
&&  1 + a_{44} \not\equiv 0 \mod{p}, \label{f3}\\
&& a_{33}  \equiv 0 \mod{p}, \label{f4}\\
&& a_{21} \equiv 0 \mod{p}, \label{f5}\\
&& a_{44} (1 + a_{22}) \equiv 0 \mod{p}, \label{f6}\\
&& a_{23} \equiv 0 \mod{p}, \label{f7}\\
&& a_{22} - a_{11} \equiv 0 \mod{p}, \label{f8}\\
&& a_{24} \equiv 0 \mod{ p}, \label{f9}\\
&& a_{11} \equiv 0 \mod{p}, \label{f10}\\
&& a_{13}a_{34} - a_{14}\equiv 0 \mod{p}, \label{f11}\\
&& a_{13} \equiv 0 \mod{p}. \label{f12}
\end{eqnarray}
\end{prop}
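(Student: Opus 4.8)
The plan is to imitate the proof of Proposition~\ref{prop1}: every congruence comes either from a characteristic subgroup forced by the regularity of $G$, or by applying $\alpha$ to a defining relator $[x_i,x_j]=w_{ij}$ and comparing, generator by generator, the normal forms of the two sides. All computations take place modulo $\gamma_2(G)=\gen{x_1^{p^{n-1}},x_2^{p^2},x_3^p,x_4^p}$, which is elementary abelian and central; consequently the bilinear identities \eqref{0a}, \eqref{0b} hold verbatim, $[g,\prod_j x_j^{b_j}]=\prod_j[g,x_j]^{b_j}$ for all $g\in G$, and, since $p$ is odd and $\gamma_2(G)$ has exponent $p$, the map $g\mapsto g^{p^k}$ is an endomorphism of $G$, so that $\alpha(x_i^{p^k})=x_i^{p^k}e_{x_i}^{p^k}=x_i^{p^k}\prod_j x_j^{p^ka_{ij}}$. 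These facts are used without further comment.

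\textbf{Equation \eqref{f1} and the non-vanishing congruences.} Since $G$ is regular, $\Omega_2(G):=\gen{g\in G\mid g^{p^2}=1}$ is characteristic, and from $G^{p^2}=\gen{x_1^{p^2},x_2^{p^2}}$, which has order $p^{n-1}$, one gets $\Omega_2(G)=\gen{x_1^{p^{n-2}},x_2^p,x_3,x_4}$. As $x_3,x_4\in\Omega_2(G)$, also $\alpha(x_3),\alpha(x_4)\in\Omega_2(G)$, hence $e_{x_3},e_{x_4}\in\Omega_2(G)$; projecting to $G/\gamma_2(G)\cong\C_{p^{n-1}}\times\C_{p^2}\times\C_p\times\C_p$ forces $a_{31},a_{41}\equiv0\pmod{p^{n-2}}$ and $a_{32},a_{42}\equiv0\pmod p$, which gives \eqref{f1}. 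For the non-vanishing facts, $\alpha$ normalises $\Phi(G)$ and induces an automorphism $\br{\alpha}$ of the elementary abelian group $G/\Phi(G)$; relative to the basis $x_1\Phi(G),\dots,x_4\Phi(G)$ the matrix of $\br{\alpha}$ over the field of $p$ elements has $(i,j)$-entry $\delta_{ij}+a_{ij}$, and its determinant is nonzero. By \eqref{f1} the entries $a_{31},a_{32},a_{41},a_{42}$ vanish, so this matrix is block upper triangular with $2\times2$ diagonal blocks $B$ on $\{1,2\}$ and $C$ on $\{3,4\}$ and $\det B\cdot\det C\ne0$. Once \eqref{f2} is available, $a_{43}\equiv0$ makes $C$ triangular, so $1+a_{33}$ and $1+a_{44}$ are invertible mod $p$ --- this is \eqref{f3}; once \eqref{f5} is also available, $a_{21}\equiv0$ makes $B$ triangular, so $1+a_{11}$ and $1+a_{22}$ are invertible mod $p$ too. (Alternatively, as in the proof of Theorem~\ref{thm1}, $p\mid 1+a_{ii}$ together with the congruences already obtained would force $\alpha(x_i)\in\Phi(G)$ or $|\alpha(x_i)|<|x_i|$, both impossible.)

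\textbf{The remaining congruences.} These are read off the six defining relators, in an order that keeps the non-vanishing facts available when they are needed. Representative cases: from $\alpha([x_3,x_4])=\alpha(x_4)^p$, comparing exponents of $x_3$ gives \eqref{f2} and comparing exponents of $x_4$ gives $a_{33}(1+a_{44})\equiv0$, hence \eqref{f4}; from $\alpha([x_2,x_3])=\alpha(x_1^{p^{n-1}})$, comparing exponents of $x_3$ gives $a_{21}(1+a_{33})\equiv0$, hence \eqref{f5}, comparing exponents of $x_1$ gives $(1+a_{22})(1+a_{33})\equiv1+a_{11}$, hence (with \eqref{f4}) \eqref{f8}, and comparing exponents of $x_4$ gives \eqref{f9}; $\alpha([x_2,x_4])=\alpha(x_2^{p^2})$ yields \eqref{f6}; and $\alpha([x_1,x_2])=\alpha(x_2^{p^2})$, $\alpha([x_1,x_3])=\alpha(x_3)^p$, $\alpha([x_1,x_4])=\alpha(x_4)^p$ supply \eqref{f7}, \eqref{f10}, \eqref{f11} and \eqref{f12}. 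In each case one expands the left side as $[x_i,x_j][x_i,e_{x_j}][e_{x_i},x_j][e_{x_i},e_{x_j}]$, replaces every commutator $[x_l,x_k]$ by the appropriate power of a generator, rewrites the product in the normal form $x_1^{c_1p^{n-1}}x_2^{c_2p^2}x_3^{c_3p}x_4^{c_4p}$, reduces the right side using $x_3^{p^2}=x_4^{p^2}=1$, and invokes \eqref{f1} to kill the cross terms carrying $a_{31},a_{32},a_{41},a_{42}$.

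The argument is entirely elementary, so the real obstacle is bookkeeping together with discipline about the order in which the congruences are derived. Each commutator expansion is a product over as many as sixteen pairs $[x_l,x_k]^{a_{il}a_{jk}}$, all of which must be put in normal form and reduced modulo the relevant power of $p$. Two points deserve care. First, the relation $[x_2,x_3]=x_1^{p^{n-1}}$ injects $x_1^{p^{n-1}}$-terms into several of these expansions while the modulus on the $x_1$-coordinate is $p^n$, so one must extract only the mod-$p$ consequences valid uniformly for every $n\ge3$; in particular, when $n=3$ there is an extra summand $x_2^{p^2a_{12}}$ in $\alpha(x_1^{p^{n-1}})$, and one must check that it lands in a coordinate disjoint from the one being compared. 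Second, the dependency order must be respected: \eqref{f1}, then \eqref{f2}, then $1+a_{33},1+a_{44}\not\equiv0$, then \eqref{f4} and \eqref{f5}, then $1+a_{11},1+a_{22}\not\equiv0$, and only afterwards \eqref{f6}--\eqref{f12}. Once \eqref{f1}--\eqref{f12} are in place the proposition is proved; deducing from them that $\alpha$ is central and that $\Autcent(G)$ is non-abelian --- with the coordinate $a_{12}$, which the present congruences do not pin down, handled by a separate endgame just as $a_{13}$ was in the proof of Theorem~\ref{thm1} --- is the business of the next theorem.
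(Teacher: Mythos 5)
Your proposal is correct and follows essentially the same route as the paper: equation \eqref{f1} is extracted from the characteristic subgroup $G_{p^2}=\Omega_2(G)=\gen{x_1^{p^{n-2}},x_2^p,x_3,x_4}$, and the remaining congruences from applying $\alpha$ to the defining relators and comparing exponents of generators, with only cosmetic differences (you source \eqref{f5} and \eqref{f7} from different relators than the paper does, and you replace the paper's ad hoc derivation of \eqref{f3} by a determinant argument on $G/\Phi(G)$, which also cleanly delivers the auxiliary facts $1+a_{11},\,1+a_{22},\,1+a_{33}\not\equiv 0 \bmod p$ that the paper uses implicitly). One small caution on ordering: the $x_4$-comparison in $\alpha([x_2,x_3])=\alpha(x_1^{p^{n-1}})$ yields $-a_{24}+a_{23}a_{34}\equiv 0 \bmod p$, so \eqref{f9} requires \eqref{f7} to be in hand first, whereas your narrative lists \eqref{f9} before \eqref{f7}; this is consistent with your own warning that the dependency order must be respected, but the stated order should be fixed.
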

\begin{proof}
Let $\alpha$ be the automorphism of $G$ such that $\alpha(x_i) = x_i e_{x_i}$, $1 \le i \le 4$. 
Since $G$ is regular, $G_{p^2} = \gen{x_1^{p^{n-2}}, x_2^p, x_3, x_4}$. We know that $G_{p^2}$ is characteristic, therefore $\alpha(x_i) = x_i e_{x_i} \in G_{p^2}$, where $i = 3, 4$. This shows that $e_{x_i} \in G_{p^2}$ for $i = 3, 4$. Thus $a_{ij} \equiv 0 \mod{p}$ for $i = 3,4$ and $j = 1, 2$. This proves that equation \eqref{f1} holds.

Equation \eqref{f2} is proved by comparing the powers of $x_3$ in $\alpha([x_3, x_4]) = \alpha(x_4^p)$ and using \eqref{f1}. Since $a_{4i} \equiv 0 \mod{p}$, $1 \le i \le 3$, it follows that $x_1^{a_{41}}x_2^{a_{42}}x_3^{a_{43}} \in \Z(G)$. Suppose $1 + a_{44} \equiv 0 \mod{p}$. Then $\alpha(x_4) = x_4 e_{x_4} =
x_4^{1 + a_{44}}x_1^{a_{41}}x_2^{a_{42}}x_3^{a_{43}} \in \Z(G)$, which is not possible. This proves equation \eqref{f3}. Equation \eqref{f4} is proved     
by comparing the powers of $x_4$ in $\alpha([x_3, x_4]) = \alpha(x_4^p)$ and using \eqref{f1} - \eqref{f3}.

Equations \eqref{f5} - \eqref{f7} are proved by comparing the powers of $x_1$, $x_2$ and $x_4$ in $\alpha([x_2, x_4]) = \alpha(x_2^{p^2})$ and using the equations above. Equations \eqref{f8} and \eqref{f9} are proved by comparing the powers of $x_1$ and $x_4$ in $\alpha([x_2, x_3]) = \alpha(x_1^{p^{n-1}})$ and using the equations above. Equations \eqref{f10}, \eqref{f11} are proved by comparing the powers of $x_3$, $x_4$ in $\alpha([x_1, x_3]) = \alpha(x_3^p)$ and using the equations above. Finally, equation \eqref{f12} is proved by comparing the powers of $x_4$ in $\alpha([x_1, x_4]) = \alpha(x_4^p)$ and using the equations above. \hfill $\Box$

\end{proof}

\begin{proof}[Proof of Theorem B]
 Let $G$ be the group defined in \eqref{eqn2}. We know that $G$ is a purely non-abelian group of order $p^{n+7}$ and  exponent $p^n$.
Let $\alpha$ be an automorphism of $G$. Then $\alpha(x_i) = x_i${\tiny$\prod$}$_{j=1}^4x_j^{a_{ij}}$. 
To prove $\Aut(G) = \Autcent(G)$, it is sufficient to show that $e_{x_i} \in \Z(G)$ for $1 \le i \le 4$. Since $\Z(G) = \Phi(G)$, we only need to show that $a_{ij} \equiv 0 \mod{p}$ for $1 \le i, j \le 4$. 

From Proposition \ref{prop2} it is obvious that $a_{ij}$ is congruent to $0$ modulo $p$ for $1 \le i, j \le 4$, except for $a_{12}$ and $a_{34}$. Since $e_{x_4} \in \Z(G)$ and $x_1^{a_{31}}x_2^{a_{32}}x_3^{a_{33}} \in \Z(G)$, we have $\alpha([x_3, x_4]) = [x_3 x_4^{a_{34}}, x_4] = [x_3, x_4] = x_4^p$. Furthermore, $\alpha([x_1, x_4]) = [x_1x_2^{a_{12}}, x_4] = [x_1, x_4][x_2, x_4]^{a_{12}} = x_4^p x_2^{p^{2}a_{12}}$, since $x_1^{a_{11}}x_3^{a_{13}}x_4^{a_{14}} \in \Z(G)$. From the presentation of the group, we have $[x_3, x_4] = [x_1, x_4]$. Thus  $x_4^p = \alpha([x_3, x_4]) = \alpha([x_1, x_4]) = x_4^p x_2^{p^{2}a_{12}}$. Hence $x_2^{p^{2}a_{12}} = 1$, which proves that $a_{12} \equiv 0 \mod{p}$.  Notice that $e_{x_1}$ and $e_{x_2}$ lie in $\Z(G)$ and $(e_{x_1})^{p^{n-1}} = 1$. Using this and comparing the powers of $x_2$ in $\alpha([x_2, x_3]) = \alpha(x_1^{p^{n-1}})$, we get $a_{34} \equiv 0 \mod{p}$. This proves that $\Aut(G) = \Autcent(G)$.

Now we proceed to show that $\Autcent(G)$ is non-abelian.
The exponents of $\Z(G)$, $\gamma_2(G)$ and $G/\gamma_2(G)$ are $p^{n-1}$, $p$ and $p^{n-1}$ respectively. Here 
\[R =\{z \in \Z(G) \mid \; |z| \le p^{n-1}\} = \Z(G)\]
and 
\[K =  \{x \in G \mid \het(x\gamma_2(G)) \ge p\} = G^{p}\gamma_2(G) = \Z(G).\]
This shows that $R = K$. Now $R/ \gamma_2(G) = \Z(G)/ \gamma_2(G) = \gen{x_1^p\gamma_2(G)} \times \gen{x_2^p\gamma_2(G)}$. This shows that condition $\text{(ii)}$ of Theorem \ref{thm2} is not satisfied. Hence $\Autcent(G)$ is non-abelian. This completes the proof of the theorem. \hfill $\Box$

\end{proof}

\noindent{\bf Acknowledgements.} We thank Derek Holt and  Bettina Eick for their help in using GAP for calculating automorphisms of finite $p$-groups, and cluster computing facility at our institute for running GAP on it. We thank the referee for giving useful comments and suggestions to make the paper more readable. Reference \cite{pH95} is brought to our knowledge by the referee. We thank M. J. Curran for bringing references \cite{mC87, HL74, rS82} to our knowledge and giving some useful suggestions.

\end{document}